\newtheorem{theo}{Theorem}[section]
\newtheorem{prop}[theo]{Proposition}
\newtheorem{lemma}[theo]{Lemma}
\newtheorem{claim}[theo]{Claim}
\begin{document}
\date{}

\title{
Tur\'an graphs with bounded matching number 
}

\author{Noga Alon
\thanks{Princeton University,
Princeton, NJ, USA 
and
Tel Aviv University, Tel Aviv,
Israel.
Email: {\tt nalon@math.princeton.edu}.
Research supported in part by
NSF grant DMS-2154082 and by USA-Israel BSF grant 2018267.
}
\and
Peter Frankl
\thanks{
R\'enyi Institute, Budapest, Hungary.
Email: {\tt peter.frankl@gmail.com}. 
}
}

\maketitle
\begin{abstract}
We determine the maximum possible number of edges of a graph with
$n$ vertices, matching number at most $s$ and clique number at most $k$
for all admissible values of the parameters.
\end{abstract}

\section{The main result}
The clique number of a graph $G$ is the maximum number of vertices in 
a complete subgraph of it. The matching number of $G$ is the maximum
cardinality of a matching in $G$.
Two classical results in Extremal Graph Theory are Tur\'an's Theorem
\cite{Tu}
determining the maximum number of edges $t(n,k)$ of a graph on $n$ vertices
with clique number at most $k$,
and the Erd\H{o}s-Gallai Theorem \cite{EG}, determining the
maximum possible number of edges of a graph with $n$ vertices and
matching number at most $s$. 

In this note we prove a common generalization. Call a graph complete
$k$-partite if its vertex set consists of $k$ pairwise disjoint  sets
and two vertices are adjacent iff they belong to distinct classes. 
Note that we allow some vertex classes to be empty.
Let $T(n,k)$ denote the
complete $k$-partite graph  with $n$ vertices in which the 
sizes of the vertex classes
are as equal as possible, and let
$t(n,k)$ denote its number of edges. Let $G(n,k,s)$ denote the complete
$k$-partite graph on $n$ vertices consisting of $k-1$ vertex classes 
of sizes as equal as possible whose total size is $s$, and one additional
vertex class of size $n-s$. Let $g(n,k,s)$ denote the number of its
edges. 

Our main result is the following.
\begin{theo}
\label{t11}
For all $n \geq 2s+1$ and every $k$, the maximum
possible number of edges of a graph on $n$ vertices with clique
number at most $k$ and matching number at most $s$ is the maximum between
the Tur\'an number $t(2s+1,k)$ and the  number $g(n,k,s)$ defined
above. (For $n \leq 2s+1$ the maximum is clearly $t(n,k)$).
\end{theo}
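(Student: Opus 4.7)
The plan is to prove the upper bound by induction on $n$ after verifying the lower bound by direct construction. For the lower bound, $T(2s+1, k)$ together with $n - 2s - 1$ isolated vertices realizes $t(2s+1, k)$ edges with clique number at most $k$ (by Tur\'an) and matching number at most $\lfloor (2s+1)/2 \rfloor = s$; and $G(n, k, s)$ has $g(n, k, s)$ edges, is complete $k$-partite (so $\omega \leq k$), and has a vertex cover of size $s$ given by the union of its $k - 1$ small classes, so $\nu \leq s$.

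For the upper bound, the base case $n = 2s + 1$ is Tur\'an's theorem since the matching constraint becomes automatic. For $n \geq 2s + 2$: if $G$ has an isolated vertex $v$, induction on $G - v$ gives $e(G) = e(G - v) \leq \max\{t(2s+1, k), g(n - 1, k, s)\}$; using $g(n, k, s) = g(n - 1, k, s) + s$, we obtain $e(G) \leq \max\{t(2s+1, k), g(n, k, s)\}$. So it suffices to assume $\delta(G) \geq 1$ and prove $e(G) \leq g(n, k, s)$.

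In this main case I consider a maximum matching $M$ of size $t \leq s$ together with the independent set $U = V(G) \setminus V(M)$ of $n - 2t$ unmatched vertices. The no-augmenting-path condition forces, for each $M$-edge $\{x, y\}$, that either at most one of $x, y$ has a neighbor in $U$, or both $x$ and $y$ share a single common neighbor in $U$. Iterated use of this principle over pairs of $M$-edges, combined with the clique constraint $\omega(G) \leq k$, yields structural information that is then used to bound the number of edges inside $V(M)$ and between $V(M)$ and $U$.

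The main obstacle is to extract the sharp bounds matching the extremal structure $G(n, k, s)$: namely, that one can identify a ``cover set'' of at most $s$ vertices in $V(M)$ whose induced subgraph is $(k - 1)$-partite (because any clique within it can be extended by a $U$-vertex), giving at most $t(s, k - 1)$ internal edges, while at most $s(n - s)$ edges leave this set. Treating the edge cases---$M$-edges whose endpoints share a $U$-neighbor, and maximum matchings of size strictly less than $s$ (which may require an additional vertex-deletion argument)---is the trickiest part of the bookkeeping, after which summation yields $e(G) \leq t(s, k - 1) + s(n - s) = g(n, k, s)$, completing the induction.
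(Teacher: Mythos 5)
Your lower-bound constructions and the isolated-vertex deletion step are fine, but the upper bound rests on a reduction that is actually false. You reduce to the claim that every $G$ with $n\ge 2s+2$, $\delta(G)\ge 1$, $\omega(G)\le k$ and matching number at most $s$ satisfies $e(G)\le g(n,k,s)$. Counterexample: let $s=5$, $n=12$, $k=10$, and let $G$ be $K_{10}$ on $u_1,\dots,u_{10}$ together with two further vertices $v_1,v_2$, each joined only to $u_1$. Then $\delta(G)=1$, $\omega(G)=10\le k$, and the matching number is $5$ (a matching uses at most one edge at $u_1$, and what remains lies in a $K_9$ or $K_8$, contributing at most $4$ more edges), yet $e(G)=45+2=47>45=\binom{5}{2}+5\cdot 7=g(12,10,5)$. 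So once isolated vertices are gone you cannot hope to prove $e(G)\le g(n,k,s)$: the term $t(2s+1,k)$ cannot be confined to the base case $n=2s+1$, because a graph with small matching number may consist essentially of one dense odd component plus a few pendant vertices. The same example destroys the central structural claim of your main case: you want a ``cover set'' of at most $s$ vertices, but the minimum vertex cover of this $G$ has $10$ vertices while $\nu(G)=5$; K\H{o}nig's equality $\tau=\nu$ fails for non-bipartite graphs, and replacing it is exactly the hard point of the problem.

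Even setting the counterexample aside, the main case is a plan rather than a proof: the existence of the cover set, the assertion that it induces a $(k-1)$-partite graph (``because any clique within it can be extended by a $U$-vertex''---why should a clique in the cover have a common neighbour in $U$?), and the final ``bookkeeping'' are precisely the substantive steps, and none is carried out. The paper resolves the difficulty by invoking the Tutte--Berge/Gallai--Edmonds decomposition: a set $B$ with $|B|=b$ such that all components $A_1,\dots,A_m$ of $G-B$ are odd and $b+\sum_i(a_i-1)/2=s$. After Zykov symmetrization on $B$ and a component-merging argument showing that all but one $A_i$ is a singleton, the edge count is at most $f(b)=t(2s-b+1,k)+b(n-2s+b-1)$, and a discrete convexity argument in $b$ shows the maximum over the relevant range occurs at an endpoint, yielding exactly $\max\{t(2s+1,k),g(n,k,s)\}$ (the large component $A_1$, of size up to $2s+1$, is the source of the Tur\'an term your scheme loses). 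If you want to keep an augmenting-path framework, you would need to build in some analogue of this decomposition; as written, the argument cannot be completed.
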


\section{Proof}
Let $G=(V,E)$ be a graph on $n \geq 2s+1$ vertices with matching number 
at most $s$
and clique number at most $k$ having the maximum possible number 
of edges.
By the Tutte-Berge Theorem or the Edmonds-Gallai Theorem, 
cf., e.g. \cite{LP},
there is a set of vertices $B$, $|B|=b$ so that each of the connected
components $A_1, A_2, \ldots ,A_m$ of $G-B$ is odd, 
and so that if the sizes of these
components are 
$$
|A_1|=a_1 \geq |A_2| =a_2 \geq \cdots \geq |A_m|=a_m \geq 1
$$ 
then
$$
b+\sum_{i=1}^m (a_i-1)/2=s
$$
and 
$$
b+\sum_{i=1}^m a_i=n.
$$
Among all such graphs with the maximum possible number of edges
assume that $G$ is one for which the sum $\sum_{i=1}^m a_i^2$
is maximum.

We use the following standard notation.
For any vertex $v$ of $G$, $N(v)$ denotes its set of neighbors.
If $C$ is a set of vertices of $G$, put $N_C(v)=N(v) \cap C$.
$G_C$ denotes the induced subgraph of $G$ on $C$.

We first prove the following lemma, which is a simple
consequence of the
Zykov symmetrization method introduced in 
\cite{Zy}. For completeness we include a short proof.
\begin{lemma}
\label{l20}
Every two non-adjacent vertices of $B$ have the same neighborhood.
\end{lemma}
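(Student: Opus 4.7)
I plan to prove the lemma by Zykov symmetrization, using the fact that a vertex $v \in B$ can be re-wired without disturbing the graph $G-B$. Suppose for contradiction that two non-adjacent vertices $u,v \in B$ satisfy $N(u) \neq N(v)$, and label them so that $d(u) \geq d(v)$. Form $G'$ from $G$ by deleting every edge incident to $v$ and then adding the edges $\{vw : w \in N_G(u)\}$.

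Four things need to be checked for $G'$. First, $e(G') = e(G) + d(u) - d(v) \geq e(G)$. Second, any clique of $G'$ containing $v$ cannot contain $u$, since the non-edge $uv$ survives; replacing $v$ by $u$ in such a clique produces a clique of the same size in $G$ (because $N_{G'}(v) = N_G(u)$), so $\omega(G') \leq k$. Third, since $v \in B$ we have $G' - B = G - B$, so the number of odd components of $G' - B$ is still $m$; the identity $m - b = n - 2s$ (obtained from $b + \sum(a_i-1)/2 = s$ and $b + \sum a_i = n$) together with the Tutte--Berge formula applied in $G'$ with witness $B$ gives $\nu(G') \leq \tfrac{1}{2}(n - (m - b)) = s$. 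Fourth, $\sum_i (a_i')^2 = \sum_i a_i^2$, again because $G' - B = G - B$. Thus $G'$ lies in the same extremal class as $G$; by maximality of $e(G)$ we get $e(G') = e(G)$, hence $d(u) = d(v)$, and so in $G'$ the vertices $u, v$ share the common neighborhood $N_G(u)$.

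To conclude that the lemma actually holds in the originally chosen $G$, I would refine the extremal choice by a tertiary tiebreaker: among all candidates maximizing $e$ and $\sum_i a_i^2$, pick $G$ to additionally maximize $\sum_C |C|^2$, where $C$ ranges over the equivalence classes of $B$ under the relation ``same neighborhood''. A short calculation shows that the Zykov move described above (after swapping $u$ and $v$ if needed so that $|C_u| \geq |C_v|$) strictly increases this quantity while preserving the first two, contradicting the extremal choice of $G$. Hence no violating pair $u, v$ exists.

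The main obstacle is the matching-number check (the third item above): unlike the edge-count and clique-number bounds, which are standard in Zykov's argument for Tur\'an's theorem, this bound relies crucially on $v \in B$, so that the Tutte--Berge witness $B$ certifying $\nu(G) \leq s$ in $G$ continues to do the same job in $G'$. Everything else is routine Zykov-style bookkeeping, plus the mild tertiary tiebreaker needed to upgrade the conclusion from ``$d(u) = d(v)$'' to ``$N(u) = N(v)$''.
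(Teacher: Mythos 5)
Your proof is correct, but it is not the route the paper takes. The paper also uses Zykov symmetrization and the same three preservation checks (edge count, clique number via the ``replace $v$ by $u$'' trick, and matching number via the fact that $G'-B=G-B$, so that $B$ remains a Tutte--Berge certificate), but it avoids your tertiary tiebreaker entirely: it reduces the lemma to showing that non-adjacency is an equivalence relation on $B$, and refutes a failure of transitivity with a \emph{three}-vertex move. Given $u,v,w\in B$ with $uv,uw$ non-edges and $vw$ an edge, single replacements force $d(u)\ge d(v)$ and $d(u)\ge d(w)$, and then replacing \emph{both} $N(v)$ and $N(w)$ by $N(u)$ changes the edge count by $2d(u)-d(v)-d(w)+1\ge 1$ --- the $+1$ coming from the destroyed edge $vw$ --- so the contradiction is a strict gain in the primary objective $e(G)$ and no further extremal refinement is needed. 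What your version buys in exchange for the extra tiebreaker is that it attacks the full statement $N(u)=N(v)$ head-on, including neighbours outside $B$: a witness $w\in N(v)\setminus N(u)$ need not lie in $B$, and the paper's transitivity argument keeps all three vertices inside $B$, so your single-replacement-plus-tiebreaker route covers that situation directly. Your computation that the move sends $v$ into the class $C_u$ and increases $\sum_C|C|^2$ by at least $2(|C_u|-|C_v|)+2>0$ is sound, since same-neighbourhood classes of $B$ can only merge, never split, under the move (if $N_G(x)=N_G(y)$ then $x\in N_G(u)\iff y\in N_G(u)$, so $x$ and $y$ gain or lose $v$ together). Two small points to tighten: the tiebreaker $\sum_C|C|^2$ depends on $B$ and not only on $G$, so the extremal choice should be made over pairs $(G,B)$ satisfying the Tutte--Berge identities (the paper's secondary tiebreaker $\sum_i a_i^2$ has the same character, and your observation that $G'-B=G-B$ keeps the pair admissible handles this); and you should note explicitly that $v\notin N_G(u)$, so that $uv$ remains a non-edge in $G'$, $d_{G'}(v)=d_G(u)$ exactly, and $u$ and $v$ really do land in a common class.
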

\begin{proof}
It suffices to show that non-adjacency is an equivalence relation 
on $B$. Indeed, this relation is trivially reflexive and 
symmetric. Suppose it is not transitive,
then there are three distinct vertices
$u,v,w$  in $B$ so that $uv, uw$ are non-edges but $vw$ is an edge.
If the degree $d(u)$ of $u$  is smaller than $d(v)$, then replacing
the neighborhood of $u$ by that of $v$ the number of edges increases.
The clique number does not increase, as any new clique $K$ must contain
$u$, but then it cannot contain $v$, and $(K-\{u\}) \cup \{v\}$ is 
a clique of the same size before the replacement. The matching number
also stays at most $s$, as  demonstrated by the set of vertices $B$
after the replacement. Thus, by the assumption that $G$ has 
a maximum possible number of edges it follows that $d(u) \geq d(v)$.
The same argument shows that $d(u) \geq d(w)$. But in this case the graph
obtained by replacing the neighborhood of $v$ by that of  $u$ and the 
neighborhood of $w$ by that of $u$ provides the desired contradiction. 
Indeed, it has more edges than $G$, clique number at most that of $G$,
and matching number at most $s$. This completes the proof of the lemma.
\end{proof}

\begin{lemma}
\label{l21}
$a_i=1$ for all $2 \leq i \leq m$.
\end{lemma}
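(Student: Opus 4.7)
Proof plan: Assume for contradiction that $a_2 \geq 3$. The strategy is to exhibit a graph $G'$ on $V(G)$ with $\omega(G') \leq k$, $\nu(G') \leq s$ and $|E(G')| \geq |E(G)|$ that, under a natural Tutte--Berge decomposition, satisfies $\sum_i (a_i')^2 > \sum_i a_i^2$ --- contradicting the maximality of $G$.

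The construction: pick any $v \in A_2$; delete all edges from $v$ to $A_2 \setminus \{v\}$ (keeping $v$'s edges to $B$); leave the edges inside $B$, inside each $A_i$ for $i \geq 3$, and from $B$ to those $A_i$ unchanged; and install on $A_1 \cup (A_2 \setminus \{v\})$ a complete multipartite graph with at most $k$ parts, obtained by merging the Zykov-type $k$-partitions of $A_1$ and of $A_2 \setminus \{v\}$ into a common $k$-partition (pairing off parts and taking unions). The components of $G' - B$ are then $A_1' := A_1 \cup (A_2 \setminus \{v\})$ (odd, of size $a_1 + a_2 - 1$), the new singleton $\{v\}$, and the unchanged $A_3, \ldots, A_m$; so the number of odd components is still $m$, $B$ remains a Tutte--Berge witness and $\nu(G') \leq s$. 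A direct computation yields
\[
\sum_i (a_i')^2 - \sum_i a_i^2 \;=\; (a_1 + a_2 - 1)^2 + 1 - a_1^2 - a_2^2 \;=\; 2(a_1 - 1)(a_2 - 1) \;>\; 0.
\]
The edge-count inequality $|E(G')| \geq |E(G)|$ reduces, since edge-maximality of $G$ forces $e(G_{A_i}) = t(a_i, k)$ for each $i$ (adding any edge inside an $A_i$ preserves both constraints), to the standard convexity-type bound $t(a_1 + a_2 - 1, k) \geq t(a_1, k) + t(a_2, k)$, which a short calculation confirms for $a_1 \geq a_2 \geq 3$.

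The main obstacle will be verifying $\omega(G') \leq k$: cliques inside $A_1'$ are controlled by its $k$-partite structure, and cliques containing $v$ are unchanged from $G$ (since $v$'s non-$B$ edges have been removed), but a clique straddling the former $A_1$, the former $A_2 \setminus \{v\}$, and a vertex $b \in B$ could in principle reach size $k + 1$ once the new $A_1$-to-$(A_2 \setminus \{v\})$ cross edges are combined with the preserved $B$-edges. The task is to align the parts of $A_1$ with the parts of $A_2 \setminus \{v\}$ in the common $k$-partition of $A_1'$ so that, for every $b \in B$, the set $N_{A_1'}(b)$ is contained in a proper subset of the $k$ parts --- exactly what is needed to rule out a $(k+1)$-clique through $b$. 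Invoking Lemma \ref{l20} (which forces $G_B$ to be complete multipartite, with classes of non-adjacent $B$-vertices sharing a common external neighborhood) together with the analogous Zykov-style partition of $A_2$ (applicable because adding edges inside $A_2$ does not alter $\nu$ or the components of $G - B$), this alignment can be carried out, and the resulting $G'$ yields the desired contradiction.
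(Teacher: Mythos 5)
Your global framework is the same as the paper's: assume $a_1\ge a_2\ge 3$, build a graph $G'$ with at least as many edges in which $(a_1,a_2)$ becomes $(a_1+a_2-1,1)$, and contradict the maximality of $\sum a_i^2$ via $(a_1+a_2-1)^2+1>a_1^2+a_2^2$. The construction of $G'$, however, is where all the difficulty lives, and your version has a genuine gap exactly at the point you yourself flag as ``the main obstacle'': the claim that the $k$-partitions of $A_1$ and of $A_2\setminus\{v\}$ can be aligned so that no $(k+1)$-clique through $B$ is created is asserted, not proved, and it is false in general. The constraints are indexed by all sets $S$ of nonempty classes of $B$: avoiding $K_{k+1}$ in $G'$ requires that the merged parts met by $\bigcap_{j\in S}N(B_j)$ number at most $k-|S|$, simultaneously for every $S$. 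Already for $k=3$ these constraints can be inconsistent: the three common neighborhoods $N(B_i)\cap N(B_j)$ (the $|S|=2$ constraints) each meet at most one part on each side and can force a specific bijection between the parts of $A_1$ and those of $A_2\setminus\{v\}$, while the single neighborhoods $N(B_i)$ (the $|S|=1$ constraints) each meet up to two parts on each side and can demand an incompatible bijection; nothing in the hypothesis that $G_{B\cup A_1}$ and $G_{B\cup A_2}$ are \emph{separately} $K_{k+1}$-free couples the two labelings. A secondary gap: $e(G_{A_i})=t(a_i,k)$ does not follow from edge-maximality of $G$ --- that only makes $G_{B\cup A_i}$ \emph{maximally} $K_{k+1}$-free, not maximum, since an edge added inside $A_i$ may create a $K_{k+1}$ together with vertices of $B$ --- so your reduction of the edge count to $t(a_1+a_2-1,k)\ge t(a_1,k)+t(a_2,k)$ is not available as stated. (The edge count can in fact be salvaged by a direct computation with the part sizes, but the clique alignment cannot.)

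The paper avoids the alignment problem entirely by performing a minimal surgery: it rewires only two vertices. Using Lemma \ref{l20} and a preliminary swap (Claim \ref{c22}) it finds $v_1\in A_1$ and $v_2\in A_2$ neither of which has a neighbor in the smallest class $B_k$, sets $A_1'=A_1\cup A_2\setminus\{v_2\}$ and $A_2'=\{v_2\}$, and replaces $N(v_1)$ by $N_{A_1}(v_1)\cup N_{A_2}(v_2)\cup\bigl(N_B(v_1)\cap N_B(v_2)\bigr)$ and $N(v_2)$ by $N_B(v_1)\cup N_B(v_2)$. Inclusion--exclusion keeps the edge count exactly equal, and every clique of $G'$ not already present in $G$ must contain $v_1$ or $v_2$, so the clique check is local: it uses only that $v_1,v_2$ avoid $B_k$ and the containments $N'_B(v_1)\subseteq N_B(v_1)$ and $N'_B(v_1)\subseteq N_B(v_2)$. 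To repair your argument you would need to replace the wholesale merge of the two multipartite structures by a local modification of this kind.
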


\begin{proof}
By  Lemma \ref{l20} every two
non-adjacent vertices of $B$ have the same neighborhood.
Since $G$ contains no clique of size $k+1$ this means that 
$G_B$ is a complete $k$-partite graph. Let
$B_1,B_2, \ldots ,B_k$ be the vertex classes of this induced
subgraph, with $|B_1| \geq |B_2| \geq \ldots \geq |B_k|$
(where some of these classes may be empty).
\begin{claim}
\label{c22}
Without loss of generality we may assume that for every
$1 \leq i \leq m$ there is a vertex $v_i \in A_i$ 
which has no neighbor in $B_k$.
\end{claim}
\vspace{0.2cm}

\noindent
{\bf Proof of Claim:}\, If $B_k=\emptyset$ this is surely true.
We can thus assume that $|B_1| \geq |B_2| \geq \ldots \geq |B_k| 
\geq 1$. Since the size $w(G)$ of the largest clique of $G$ is at most
$k$, no vertex in $A_i$ is adjacent to a member of each $B_j$,
$1 \leq j \leq k$. 
If all vertices of $A_i$ are adjacent to 
$B_k$ (to all of it, as all vertices in $B_k$ have the same
neighborhood) 
we can swap $B_j$ and $B_k$ in the neighborhood 
of each $v \in A_i$ leaving it connected to both if it has been
connected to both, and leaving it connected only to $B_j$
if it has been connected only to $B_k$. This can only increase
the number of edges, as $|B_k| \leq |B_j|$. Choosing $j$ so that
some vertex $v \in A_i$ has no neighbors in $B_j$ gives the desired
assertion of the claim. Note also that swapping $B_j$ and $B_k$
as above cannot increase the size of the maximum clique as any new
clique created this way includes a vertex of $B_j$, some vertices
of $A_i$,
and no vertex of $B_k$. Replacing the vertex from $B_j$ by any one
of $B_k$ gives a clique of the same size in the graph before the swap. 
Since the matching number also stays at most $s$, as shown by
$B$, this
completes the proof of the claim. \hfill  $\Box$

Returning to the proof of the lemma assume it is false and
$a_1 \geq a_2 \geq 3$. Let $v_1 \in A_1$ and $v_2 \in A_2$ 
be as in the claim.
Now modify $G$ into $G'$ by defining
$A_1'=A_1 \cup A_2 \setminus \{v_2\}, A_2'=\{v_2\}$, keeping $B'=B$
and only changing the edges incident with $v_1$ and $v_2$ as follows.
The new neighborhood of $v_1$ is 
$$
N'(v_1)=N_{A_1}(v_1) \cup N_{A_2}(v_2) \cup (N_B(v_1) \cap N_B( v_2)).
$$
The new neighborhood of $v_2$ is $N_B(v_1) \cup N_B(v_2)$.

The total number of edges is unchanged, and $(a_1,a_2)$ changed to
$(a_1+a_2-1,1)$ implying that the matching number stays at most
$s$, as both $a_1+a_2-1$ and $1$ are odd. 
The clique number stays at most $k$. Indeed, any new
clique containing $v_2$ is of size at most $k$ since
neither $v _1$ nor $v_2$ are adjacent to $B_k$ in $G$. Any new clique 
$K$ in $G'$ 
containing $v_1$ contains in $A_1'$ either only vertices of $A_1$
or only vertices of $A_2-\{v_2\}$ (in addition to $v_1$). In the first
case, since $N'_B(v_1) \subset N_B(v_1)$, the same clique appears also
in $G$. In the second case, since $N'_B(v_1)\subset N_B(v_2)$,
$(K-\{v_1\} )\cup\{v_2\}$ is a clique in $G$, of the same size as
$K$.  Since
$(a_1+a_2-1)^2+1^2> a_1^2+a_2^2$ this yields a contradiction and
completes the proof of the lemma.
\end{proof} 

By the lemma it follows that $a_1=2s-2b+1$. We consider several
possible cases, as follows.
\vspace{0.2cm}

\noindent
{\bf Case 1:}\, $b=0$. In this case $a_1=2s+1$ and all other
vertices of $G$ are isolated, showing that the number of edges
is at most $t(2s+1,k)$.
\vspace{0.2cm}

\noindent
{\bf Case 2:}\, $b=s$. In this case $a_1=1$ and all the components
of $G-B$ are isolated vertices. The induced subgraph of $G$ on
the union of $B$ with arbitrarily chosen additional 
$\lfloor s/(k-1) \rfloor$ components (each of size $1$) has
at most $t(s+\lfloor s/(k-1) \rfloor,k)$ edges. Any other 
vertex can be connected only to the vertices of $B$, namely
has degree at most $s$, and this gives exactly the number
$g(n,k,s)$ for the total number of edges.
\vspace{0.2cm}

\noindent
{\bf Case 3:}\, $|B|+a_1=2s-b+1 \leq s+\lfloor s/(k-1) 
\rfloor$.  This is similar to Case 2. The induced  subgraph of
$G$ on the union of $B$ with $A_1$ and with additional components
having total size $s+\lfloor s/(k-1) \rfloor$  spans at most
$t(s+\lfloor s/(k-1) \rfloor,k)$ edges. Any other vertex has
degree at most $b \leq s$ and the desired estimate follows
as before.
\vspace{0.2cm}

\noindent
{\bf Case 4:}\, $|B|+a_1=2s-b+1 > s+\lfloor s/(k-1) \rfloor $.
In this case $0 \leq b \leq s-\lfloor s/(k-1 \rfloor$.
Define
$$
f(b) =t(2s-b+1,k)+b (n-2s+b-1).
$$
The number of edges of $G$ is clearly at most $f(b)$.
Indeed, the induced subgraph on $B \cup A_1$
spans at most $t(2s-b+1,k)$ edges, and all remaining vertices
have degrees at most $b$.
We claim that in the relevant range of $b$, $f(b+1)-f(b)$ is an increasing
function of $b$. Note that the claim here is not that the function
$f(b)$ itself is increasing (in general it is not), but that its (discrete) 
derivative is increasing, that is, it is a discrete convex function. 
To prove the claim note that
$$
f(b+1)-f(b)=n-2s+2b-[t(2s-b+1,k)-t(2s-b,k)]
$$
When $b$ increases by $1$, the term $(n-2s+2b)$ increases 
by $2$, and the term 
$$
t(2s-b+1,k)-t(2s-b,k)
$$ 
can only decrease (as it is the
difference in the total size of the largest $k-1$ classes among the $k$
nearly equal classes of the corresponding 
Tur\'an graphs, and this quantity can only decrease
(by at most $1$) when decreasing the number of vertices $2s-b$ by $1$).
This shows that $f(b+1)-f(b)$ is increasing in the range above. 
Therefore,
if $f(b)$ obtains a maximum at some $b>0$ in this range, that is,
$f(b) \geq f(b-1)$ then it must be that the maximum is obtained
at the largest possible $b$ in this range, which is
$b=s-\lfloor s/(k-1 \rfloor$.  But this is covered by Case 3,
completing the proof.  $\Box$

\section{Extension}
It may be interesting to extend Theorem \ref{t11} by replacing the
forbidden clique $K_{k+1}$ 
by other forbidden subgraphs. This means to determine 
the maximum possible number of edges of an $H$-free graph on $n$ vertices
with matching number at most $s$. Recall that a graph $H$ is 
color-critical if it contains an edge whose deletion decreases
its chromatic number.  It is not difficult 
to prove the following, combining the initial part of
our proof here
with the known result of Simonovits \cite{Si} about the
Tur\'an numbers of color-critical graphs. Here we include 
a slightly simpler proof which avoids the application of the
Tutte-Berge or the Gallai-Edmonds Theorems.
\begin{prop}
\label{p31}
For every fixed color-critical graph $H$ of chromatic number 
$k+1>2$, any $s>s_0(H)$ and any $n>n_0(s)$, the maximum possible number 
of edges of an $H$-free graph on $n$ vertices with matching number
at most $s$ is $g(n,k,s)$.
\end{prop}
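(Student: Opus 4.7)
The plan is to prove the upper bound $e(G)\le g(n,k,s)$; the matching lower bound is immediate because $G(n,k,s)$ is complete $k$-partite, hence $k$-chromatic and $H$-free (as $\chi(H)=k+1$), with $\nu=s$ and $g(n,k,s)$ edges.

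Let $G$ be $H$-free with $\nu(G)\le s$ and the maximum number of edges. Fix a maximum matching $M=\{u_iv_i:1\le i\le t\}$, $t\le s$, and set $W=V(M)$, $U=V\setminus W$. Then $U$ is independent, and a standard augmenting-path argument gives that for each $i$, one of $N_U(u_i),N_U(v_i)$ is empty (or both equal the same singleton of $U$). After relabelling within each pair, write $A=\{u_1,\dots,u_t\}$ with $N_U(A)=\emptyset$ and $B=\{v_1,\dots,v_t\}$. A short monotonicity check showing that $g(n,k,\cdot)$ is increasing on the relevant range (valid for $n>n_0(s)$) reduces us to the case $t=s$, so $|A|=|B|=s$.

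The core step is to exploit the set $U^\ast\subseteq V\setminus B$ of vertices adjacent to every vertex of $B$. Extremality forces nearly all vertices of $A\cup U$ to lie in $U^\ast$, since any missing edge from $V\setminus B$ to $B$ can be added without creating $H$ or increasing $\nu$ by a local swap combined with the symmetrisation idea of Lemma~\ref{l20}. With $|U^\ast|$ large, $U^\ast$ independent, and $U^\ast$ completely joined to $B$, the $H$-freeness of $G[B\cup U^\ast]$ forces $G[B]$ to be $(k-1)$-colourable: otherwise $G[B]$ contains enough structure that, once blown up using universal vertices from $U^\ast$ and the colour-critical edge of $H$, a copy of $H$ appears in $G$ by Simonovits' theorem applied to the resulting large subgraph. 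Hence $e(G[B])\le t(s,k-1)$. Combined with the augmenting bound $e(B,U)\le s(n-2s)$, the fact that $e(G[A])=0$ (any edge in $A$ together with two parallel $B$-to-$U^\ast$ edges yields an augmenting path of length three), and the trivial $e(A,B)\le s^2$, summing gives
\[
e(G)\le 0+s^2+t(s,k-1)+s(n-2s)=g(n,k,s).
\]

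The main obstacle is the last implication: translating ``$G[B]$ has chromatic number at least $k$ together with a rich common neighbourhood in $U^\ast$'' into an actual copy of $H$ in $G$. When $H=K_{k+1}$ this is immediate, since a $K_k$ in $B$ and a universal vertex in $U^\ast$ give a $K_{k+1}$; for a general colour-critical $H$ one needs $|U^\ast|$ large enough to blow up the $K_{k+1}$-like structure and accommodate the non-singleton colour classes of $H$, which is precisely where both hypotheses $s>s_0(H)$ and $n>n_0(s)$ are used. The complementary case, where not enough vertices of $V\setminus B$ are universal to $B$, is absorbed by observing that each non-universal vertex contributes a linear-in-$n$ deficit to $e(B,U)$, which for $n>n_0(s)$ dominates the $O(s^2)$ slack that the weaker Simonovits bound $e(G[W])\le t(2s,k)$ would allow over $t(s,k-1)+s^2$.
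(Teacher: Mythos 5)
Your overall architecture (isolate a set $B$ of $s$ special vertices, show the rest is essentially independent, bound $e(G[B])$ by $t(s,k-1)$ and everything else trivially, and sum to $t(s,k-1)+s(n-s)=g(n,k,s)$) is close in spirit to the paper's, which uses the set $X$ of vertices of degree exceeding $2s$ in place of your matched side $B$. But the core step is broken. You claim that $H$-freeness of $G[B\cup U^\ast]$, with $U^\ast$ a large independent set joined completely to $B$, forces $G[B]$ to be $(k-1)$-colourable. This is false: take $k=3$, $H=K_4$ (colour-critical, $\chi=4$), and let $G[B]$ contain a $C_5$ and be triangle-free; then the join of $G[B]$ with any independent set is $K_4$-free (a $K_4$ would need a triangle inside $B$), yet $\chi(G[B])=3>k-1$. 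The underlying issue is that ``chromatic number at least $k+1$'' does not imply ``contains $H$'', and Simonovits' theorem is an edge-count statement ($\ex(N,H)=t(N,k)$ for $N$ large), not a structural one that lets you blow up a high-chromatic configuration into a copy of $H$. The correct move, and the one the paper makes, needs no colourability claim at all: pick $Z\subseteq U^\ast$ of size $m=\lfloor s/(k-1)\rfloor$; the induced subgraph on $B\cup Z$ is $H$-free on $s+m$ vertices, so Simonovits (this is where $s>s_0(H)$ enters) gives $e(G[B])+sm\le t(s+m,k)=t(s,k-1)+sm$, which is exactly the bound $e(G[B])\le t(s,k-1)$ you want.

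A secondary gap: your claim that ``extremality forces nearly all vertices of $A\cup U$ to lie in $U^\ast$, since any missing edge from $V\setminus B$ to $B$ can be added without creating $H$'' is unjustified -- $H$-freeness is not preserved under adding edges, and a symmetrisation in the style of Lemma~\ref{l20} only moves whole neighbourhoods, it does not let you complete an arbitrary vertex to all of $B$. In the paper's version of the argument this step is simply not needed: one shows directly that at most $s$ vertices have degree exceeding $2s$ (else a greedy augmentation gives a matching of size $s+1$), that the case $|X|<s$ loses a full factor of $n$ and is dominated for $n>n_0(s)$, and that $V-X$ is independent when $|X|=s$; after that the Simonovits count above finishes the proof. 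I would also tidy the matching decomposition (the case where $u_i$ and $v_i$ share a single common neighbour in $U$ prevents you from placing either in $A$ with $N_U=\emptyset$), but these are repairable details; the colourability deduction is the real obstruction.
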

\begin{proof}
The graph $G(n,k,s)$ described before the  statement of the 
main theorem is $k$ chromatic and hence $H$-free. Since its
matching number is $s$ this implies that the number of edges of
this graph, which is $g(n,k,s)$,  is a lower bound for the
maximum considered in the proposition.
To prove the upper bound, let
$H,k,s$ be as above and let 
$G$ be an $H$-free graph on $n$ vertices with matching number at most
$s$ having the maximum possible number
of edges. Assume, further, that $s$ is sufficiently
large as a function of $H$ and that $n$ is sufficiently
large as a function of $s$. 

Note, first, that $G$ cannot contain more than $s$ vertices of degrees
exceeding $2s$. Indeed, otherwise let $\{x_1,x_2, \ldots x_{s+1}\}$
be $s+1$ such vertices. For each $x_i$, in order,
let $y_i$ be an arbitrarily chosen neighbour of $x_i$ which differs
from all $x_j$ and all previously chosen $y_j$. As there are only
$s+i-1 \leq 2s $ such forbidden vertices  (we do not have to count the
vertex $x_i$ itself) there is always a choice for $y_i$. This gives
a matching of size $s+1$, contradicting the assumption.

Let $X$ be the set of all vertices of degree exceeding $2s$. By the
paragraph above $|X| \leq s$. Put $Y=V-X$. In the induced subgraph of
$G$ on $Y$ every degree is at most $2s$ and there is no matching of
size $s+1$, hence by Vizing's Theorem the number of vertices in this
induced subgraph is at most $(2s+1)s$. As the total number of edges
incident with the vertices in $X$ is smaller than $|X|n$
(with room to spare) it follows that
if $|X|<s$ then the number of edges of $G$ is smaller than
$(s-1)n +2(s+1)s$. This is smaller than $g(n,k,s)$
for $n$ exceeding, say, $3s^2$ (we make no 
attempt to optimize $n_0(s)$), 
showing that we may assume that $|X|=s$.

We claim that $Y=V-X$ is an independent  set in $G$. Indeed, if it contains
an edge $z_1z_2$ we can, as before, use the fact that the degree of
each vertex of $X$ exceeds $2s$ to pick distinct $y_i \in
Y-\{z_1,z_2\}$ so that $x_iy_i$ is an edge for each $i$, contradicting
again the assumption about the matching number. Thus $Y$ is indeed
independent.

Let $Z$ be an arbitrary subset of $Y=V-X$
of size $m=\lfloor s/(k-1) \rfloor$.
By the result of Simonovits, for $s>s_0(H)$
the induced subgraph of $G$ on $X \cup Z$ contains at most 
$t(s+m,k)$ edges. In
addition, all other edges of $G$ are incident with the vertices of $X$,
as $Y$ is independent.  Therefore, the total number of edges of $G$
is at most the number of edges of the graph obtained from
the Tur\'an graph
$T_{s+m,k}$ on a set $X \cup Z$ of $s+m$ vertices
in which $Z$ is (one of) the  smallest vertex classes
by adding to it an independent set of size $n-s-m$
and by connecting each of its vertices to the 
$s$ vertices of $X$. It is easy
to see that this graph is isomorphic to the graph $G(n,k,s)$,
completing the proof. 
\end{proof}


\begin{thebibliography}{99}

\bibitem{EG}
P. Erd\H{o}s and T. Gallai, 
On maximal paths and circuits of graphs, Acta Math. Acad.
Sci. Hungar 10 (1959), 337--356.

\bibitem{LP}
L. Lov\'asz and M. D. Plummer, {\em Matching Theory},
North-Holland Mathematics Studies, 121. Annals of Discrete Mathematics, 29. 
North-Holland Publishing Co., Amsterdam 1986, xxvii+544 pp. 

\bibitem{Si}
M. Simonovits, 
A method for solving extremal problems in graph theory, stability problems.
In: Theory of Graphs (Proc. Colloq., Tihany, 1966),
Academic Press, 1968, pp. 279--319.

\bibitem{Tu}
P. Tur\'an, 
On an extremal problem in graph theory (in Hungarian), 
Mat. Fiz. Lapok 48 (1941), 436--452.

\bibitem{Zy}
A. A. Zykov, On Some Properties of Linear Complexes, Mat. Sbornik N. S.,
24 (66) (1949), 163--188.
\end{thebibliography}
\end{document}